\documentclass[11pt,oneside]{article}
\usepackage{amsfonts,amsmath,amssymb,amsthm}
\usepackage{footmisc,fancyhdr,graphicx,float}
\usepackage{natbib}
\usepackage[colorlinks=true,linkcolor=blue,citecolor=blue,urlcolor=magenta,pdfstartview=FitB,bookmarksopen=true]{hyperref}
\usepackage[top=1.1in,bottom=1.1in,left=1.1in,right=1.1in]{geometry}
\usepackage[onehalfspacing]{setspace}

\usepackage{tikz}
\usetikzlibrary{trees,arrows,automata,graphs}
\usetikzlibrary{positioning,chains,fit,shapes,calc,topaths}
\usetikzlibrary{calc,decorations.markings}

\usepackage{hyphenat}
\lefthyphenmin=3
\righthyphenmin=3

\newtheorem*{conjecture*}{Conjecture}
\newtheorem{theorem}{Theorem}

\newtheorem{corollary}[]{Corollary}

\newtheorem{example}[]{Example}

\newtheorem{proposition}[]{Proposition}

\newcommand{\mbb}{\mathbb}
\newcommand{\mc}{\mathcal}

\newcommand{\rbk}[1]{\left(#1\right)}

\begin{document}

\title{\vspace{-1.5cm}Resolution of a conjecture on variance functions for one-parameter natural exponential family}
\author{Xiongzhi Chen\footnote{Center for Statistics and Machine Learning, and Lewis-Sigler Institute for Integrative Genomics, Princeton University, Princeton, NJ 08544. Email: \texttt{xiongzhi@princeton.edu}}}
\date{}
\maketitle

\begin{abstract}
  One-parameter natural exponential family (NEF) plays fundamental roles in probability and statistics. A conjecture of Bar-Lev, Bshouty and Enis states that a polynomial with a simple root at $0$ and a complex root with positive imaginary part is the variance function of some NEF with mean domain $\rbk{0,\infty}$ if and only if the real part of the complex root is not positive. We prove the conjecture in this note.
\medskip\newline\textit{Keywords}: Natural exponential family; polynomial variance functions.
\medskip\newline\textit{MSC 2010 subject classifications}: Primary 60E05; Secondary 62E10.
\end{abstract}

\section{Introduction}\label{Sec:Intro}

One-parameter natural exponential family (NEF) is an important family of probability measures
and has been widely used in statistical modelling \citep{McCullagh:1989,Letac:1992}.
Let $\beta$ be a positive Radon measure on $\mathbb{R}$ that is not concentrated on one point. Suppose the interior $\Theta$ of
\begin{equation*}
\tilde{\Theta}=\left\{  \theta\in\mathbb{R}:g\left(  \theta\right) =\int%
e^{x\theta}\beta\left(  dx\right)  <\infty\right\}
\end{equation*}
is not empty. Then $\mathcal{F}=\left\{F_{\theta}  :\theta\in\Theta\right\}  $ with
\begin{equation*}
F_{\theta}\left(dx\right)=\exp\left[  \theta x-\log g\left(  \theta\right)  \right]
\beta\left(  dx\right)  
\end{equation*}
forms a NEF with respect to the basis $\beta$. Without loss of generality, we can assume $0 \in \Theta$, so that  $\beta\rbk{\mbb{R}}=1$.

For each NEF $\mathcal{F}$, the mapping $\mu\left(  \theta\right) =\dfrac{d}{d\theta}\log g\left(  \theta\right)$ defines the mean function $\mu:\Theta\rightarrow U$ with $U=\mu\left(  \Theta\right)  $, and $U$ is called the ``mean domain''. Let $\theta=\theta\left(  \mu\right)  $ be the inverse function of $\mu$. Define
the function $v$ on $U$ as
\begin{equation*}
  v\left(  \mu\right)  =\int\left(x-\mu\right)  ^{2}F_{\theta\left(  \mu\right)  }\left(  dx\right)
\end{equation*}
for $\mu\in U$. Then the pair $\left(  v,U\right)  $ is called the variance function (VF) of $\mathcal{F}$. Note that $v$ is a positive, real analytic function on $U$ and that $v$ characterizes the NEF $\mathcal{F}$.

Let $\mc{G}_D$ be the set of positive, real analytic functions on a domain $D \subseteq \mbb{R}$. In order to use an $f \in \mc{G}_D$ to model the variance-mean relationship for some data and assume that the associated random errors follow a NEF, it is crucial to ensure that $\rbk{f,D}$ is indeed the VF of some NEF. Therefore, identifying which $f \in \mc{G}_D$ are VFs of NFEs is of importance. For few of these results, see, e.g., \cite{Lev:1991,Letac:1992,Letac:2016}.

The authors of a ground breaking paper \cite{Lev:1992} raised the following:
\begin{conjecture*}\label{Conj}
Let
\begin{equation}\label{eq:vf}
v\left(  u\right)  =a_0 u\left(  u-u_{1}\right)^{n}\left(  u-\bar{u}_{1}\right)  ^{n}\text{,}
\end{equation}
where $a_{0}>0$, $\Im\rbk{u_{1}}>0$, and $\bar{u}_{1}$ is the complex conjugate of $u_{1}$. Then $\left(
v,\left(  0,\infty\right)  \right)  $ is a variance function for all $n\in \mathbb{N}$ if
and only if $\Re \rbk{u_{1}}\leq0$.
\end{conjecture*}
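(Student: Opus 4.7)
The plan is to treat the two implications separately. Throughout, I will use the ODE viewpoint: $(v,U)$ is the VF of a NEF on $U$ if and only if the separable equation $\theta'(\mu)=1/v(\mu)$, combined with $k'(\theta)=\mu(\theta)$ for the cumulant generating function $k(\theta)=\log g(\theta)$, produces $g(\theta)=e^{k(\theta)}$ that is the Laplace transform of a nonnegative Radon measure $\beta$ on $\mathbb{R}$. The rescaling $v \mapsto c\,v(\cdot/c)$ is a bijection on VFs, so I may normalize $a_0$ and $|u_1|$ conveniently.

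The ``only if'' direction reduces to the case $n=1$: if $(v,(0,\infty))$ is a VF for every $n\in\mathbb{N}$, then in particular for $n=1$, and the $n=1$ cubic $v(u)=a_0 u(u-u_{1})(u-\bar{u}_{1})$ is already known from \cite{Lev:1992} to be a VF on $(0,\infty)$ exactly when $\Re(u_{1})\le0$. I will briefly recall the obstruction: when $\Re(u_{1})>0$, integrating $1/v$ by partial fractions and exponentiating produces a $g$ whose poles $u_{1},\bar{u}_{1}$ in the open right half-plane force a signed component in any putative inverse Laplace measure, violating $\beta\ge 0$.

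For the ``if'' direction, assume $\Re(u_{1})\le0$ and construct, for every $n\in\mathbb{N}$, a NEF on $(0,\infty)$ with VF equal to $v$. The construction has three steps. First, decompose $1/v(\mu)$ into partial fractions: a simple pole at $\mu=0$ and poles of orders $1,\dots,n$ at each of $\mu=u_{1}$ and $\mu=\bar{u}_{1}$. Integrate to express $\theta(\mu)$ in closed form in terms of $\log\mu$, $\log\bigl((\mu-u_{1})(\mu-\bar{u}_{1})\bigr)$, $\arctan$, and negative integer powers of $(\mu-u_{1})$ and $(\mu-\bar{u}_{1})$. Second, identify the natural parameter interval $\Theta$ from the asymptotics of $\theta(\mu)$ as $\mu\to0^{+}$ and $\mu\to\infty$, and invert to obtain $\mu(\theta)$ real-analytic on $\Theta$. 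Third, and crucially, show that $g(\theta)=\exp\bigl(\int\mu\,d\theta\bigr)$ is the Laplace transform of a nonnegative Radon measure on $\mathbb{R}$. My plan for this is to factor $g$, via the partial-fraction data, into a product of elementary Laplace transforms of known nonnegative densities (powers times exponentials, i.e.\ shifted gamma densities, together with polynomial corrections arising from the higher-order poles at $u_{1},\bar{u}_{1}$), and then invert by convolution.

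The main obstacle is the uniform-in-$n$ positivity at step three, since the order-$n$ poles of $1/v$ at $u_{1},\bar{u}_{1}$ produce polynomial-in-$\mu(\theta)$ corrections whose exponentiation is a convergent series of convolutions. The hypothesis $\Re(u_{1})\le0$ enters here by ensuring that the location parameters in the resulting gamma-type factors are nonnegative, so every factor in the convolution is supported on $[0,\infty)$ and nonnegativity is preserved. I expect to close this step either (i) by producing an explicit closed-form integral representation of $\beta$ open to direct inspection, or (ii) via the Bernstein--Widder criterion, reducing $\beta\ge 0$ to checking $(-1)^{j}g^{(j)}(-\theta)\ge0$ for all $j\ge0$, which by the Fa\`a di Bruno formula becomes a combinatorial positivity of certain polynomials in the partial-fraction residues that holds precisely when $\Re(u_{1})\le0$.
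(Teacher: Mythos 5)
Your proposal has genuine gaps in both directions. For the necessity, you reduce to $n=1$ by reading the conjecture as ``if $(v,(0,\infty))$ is a VF \emph{simultaneously} for all $n$, then $\Re(u_1)\le 0$.'' That reading trivializes the problem: the cases $n\in\{1,2,3\}$ were already verified in \cite{Lev:1992}, and the entire content of the conjecture is the per-$n$ statement --- for \emph{each} fixed $n$, if $(v,\mathbb{R}^{+})$ is a VF then $\Re(u_1)\le 0$. The paper proves exactly this, and the difficulty it overcomes is that the residue $\tau=-2\pi i\operatorname{Res}(1/v,u_1)$ needed for the criterion of Theorem 1 of \cite{Lev:1992} is intractable to compute explicitly when $n$ is large ($v$ has degree $2n+1$ and $u_1$ is a zero of order $n$). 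The paper avoids the computation entirely with a contour integral over a large semicircle indented at the origin: this yields $\Im(\tau)=\pi/v'(0)=2^{-1}d$ and $\Re(\tau)=-\lim_{R\to\infty}\int_{\rho}^{R}\alpha(\zeta)\,d\zeta$ with $\alpha(z)=z^{-1}\bigl(|z-u_1|^{-2n}-|z+u_1|^{-2n}\bigr)$, which is strictly positive on $[\rho,\infty)$ precisely when $\Re(u_1)>0$; hence $\Re(\tau)<0$, contradicting the necessary condition $\theta_0+\tau\notin\Theta+2^{-1}id$. Your sketched ``obstruction'' for $n=1$ (poles of $g$ in the right half-plane forcing a signed component) also conflates the $\mu$-plane, where $u_1$ lives, with the $\theta$-plane, where $g$ is defined; that is not the mechanism of the Bar-Lev--Bshouty--Enis criterion.

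For the sufficiency, your step three rests on factoring $g(\theta)$ into elementary Laplace transforms read off from the partial-fraction decomposition of $1/v(\mu)$. But the partial fractions decompose $d\theta/d\mu$ \emph{additively in the variable $\mu$}; after integrating and inverting the transcendental map $\mu\mapsto\theta(\mu)$ there is no corresponding multiplicative decomposition of $g$ in the variable $\theta$. Already for $n=1$, $\Re(u_1)=0$, $|u_1|=1$ one obtains the strict arcsine family with cumulant function $\arcsin e^{\theta}$, which is not a product of gamma-type Laplace transforms; the Bernstein--Widder route faces the same obstacle because $g$ is only defined implicitly. The correct and short argument, which the paper uses, is that $\Re(u_1)\le 0$ makes every coefficient of $v(u)=a_0u\bigl(u^{2}-2\Re(u_1)u+|u_1|^{2}\bigr)^{n}$ non-negative, and such polynomials (simple zero at $0$, mean domain $(0,\infty)$) are known to be VFs of infinitely divisible NEFs concentrated on $\mathbb{N}$ by \cite{Lev:1987} and Proposition 4.4 of \cite{Letac:1990}.
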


Let $\mathbb{R}^{+}=\left(  0,\infty\right)  $.
When $\Re\left(  u_{1}\right)  \leq0$ the polynomial $v$ has non-negative
coefficients. By \cite{Lev:1987} and Proposition 4.4 of \cite{Letac:1990}, $\left(
v,\mathbb{R}^{+}\right)  $ is a VF of an infinitely divisible NEF concentrated
on the set $\mathbb{N}$ of non-negative integers. This justifies the sufficiency in the conjecture, and we are generating
probability distributions on $\mathbb{N}$ using $v$. Thus, it is left to prove the necessity.
\cite{Lev:1992} derived a powerful result, their Theorem 1, for this purpose.
Specifically, this theorem requires to determine the sign of $\Re\rbk{\tau}$ of the residue $\tau=-2\pi i\operatorname*{Res}\left(  1/v,u_{1}\right)  $, where $i=\sqrt{-1}$. \cite{Lev:1992} directly computed $\tau$ to determine the sign of $\Re\rbk{\tau}$ and only verified the necessity for $n\in\left\{  1,2,3\right\}  $.
However, when $n$ is large computing $\tau$ becomes intractable since $1/v$ is the reciprocal of a polynomial $v$ of degree $2n+1$. This is revealed by the already complicated expression for $\tau$ when $n=3$ in their Theorem 6.
So a general, abstract approach is needed to determine the sign of $\Re\rbk{\tau}$ in order to prove the necessity for all $n$.

In this note, we prove the necessity for all $n \in \mbb{N}$ using a different strategy that exploits the algebraic and analytic properties of $v$. The positive answer to the conjecture enlarges the family of polynomials that can be the VFs of NEFs and the variance-mean relationships that can be used for statistical modelling using NEFs.

\section{Proof of necessity}\label{Sec:ResolveConj}

Let $\mu_{0}=\mu\left(  0\right)$ and $\theta_{0}=\lim\limits_{u\rightarrow+\infty}\int_{\mu_{0}}^{u}\frac
{dt}{v\left(  t\right)  }$. Then $0<\theta_{0}<\infty$ when $\left(
v,\mathbb{R}^{+}\right)  $ is a VF and $\Theta=\left(  -\infty,\theta
_{0}\right)  $. Let $d=$ $\frac{2\pi}{v^{\prime}\left(  0\right)  }$,
$\Theta+2^{-1}id=\left\{  z\in\mathbb{C}:z=\theta+2^{-1}id,\theta\in
\Theta\right\}  $ and
\[
S_{0}^{+}=\left\{  z\in\mathbb{C}:\Re\left(  z\right)  \in\Theta\text{ and
}0<\Im\left(  z\right)  <2^{-1}d\right\}.
\]
Our proof relies on a partial result of Theorem 1 of
\cite{Lev:1992} rephrased in our notations as:

\begin{proposition}
[]\label{Thm:BBE}Let $v$ be a polynomial of degree greater than $2$ such
that it has a simple zero at the origin, only one zero with positive imaginary
part and its conjugate, and no other zeros. Then, if $\left(  v,\mathbb{R}^{+}\right)  $ is a VF
of some NEF, it is necessary that
$\theta_{0}+\tau\notin S_{0}^{+}$ and $\theta_{0}+\tau\notin\Theta +2^{-1}id$.

\end{proposition}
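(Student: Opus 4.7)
Since $\rbk{v,\mbb{R}^{+}}$ is a VF of some NEF with basis $\beta$, put $g\rbk{\theta}=\int e^{\theta x}\beta\rbk{dx}$ and $k\rbk{\theta}=\log g\rbk{\theta}$; on $\Theta$ one has $k'=\mu$ and $k''=v\circ\mu$. Because $\beta$ is a probability measure with $0\in\Theta$, Fubini together with Morera's theorem shows that $g$ extends to a function analytic on the vertical strip $\Theta+i\mbb{R}$, and hence $\mu=g'/g$ extends meromorphically to $\Theta+i\mbb{R}$ with poles exactly at the zeros of $g$. I would use this extension as the working definition of $\mu$ off the real axis.

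Next, I would exploit the autonomous ODE $\mu'=v\rbk{\mu}$ together with its local inverse $F\rbk{u}=\int_{\mu_{0}}^{u}dt/v\rbk{t}$, which on $\Theta$ satisfies $F\rbk{\mu\rbk{\theta}}=\theta$ and extends by analytic continuation along any path in $\mbb{C}\setminus\cbk{0,u_{1},\bar{u}_{1}}$. The multivaluedness of $F$ is controlled by residues of $1/v$: a positively oriented loop around $0$ contributes $2\pi i/v'\rbk{0}=id$, which is exactly why $d=2\pi/v'\rbk{0}$ is the natural imaginary period of $\mu$ and $\Theta+i\sbk{0,d}$ is the natural period strip; and a loop around $u_{1}$, oriented so as to lift upward in the $\theta$-plane, contributes precisely $\tau=-2\pi i\operatorname{Res}\rbk{1/v,u_{1}}$.

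The core of the argument is a monodromy computation. Consider a path $\gamma$ in the complex $u$-plane beginning at $\mu_{0}$, rising into the upper half plane, encircling $u_{1}$ with the orientation prescribed by the sign of $\tau$, and escaping to $+\infty$ along a real direction from above; since $\deg v\geq 3$, the contribution from the arc at infinity vanishes, and $F$ along $\gamma$ tends to $\theta_{0}+\tau$. Lifting $\gamma$ via the extension of $\mu$ yields $\mu\rbk{\theta_{0}+\tau}=+\infty$, i.e.\ $g\rbk{\theta_{0}+\tau}=0$. If $\theta_{0}+\tau$ lay in $S_{0}^{+}$, this zero of $g$ would sit strictly inside the open domain where $g$ is analytic and where, by the conjugation symmetry $\mu\rbk{\bar\theta}=\overline{\mu\rbk{\theta}}$, positivity of $\beta$, and the $id$-periodicity coming from the residue at $0$, no pole of $\mu$ can occur; this is the desired contradiction. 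The same circle of ideas handles the horizontal line $\Theta+2^{-1}id$: there the reflection symmetry $\theta\mapsto id-\bar\theta$ across the mid-line of the period strip forces $g$ to take conjugate-paired values incompatible with a generic complex displacement $\tau$ having a zero of $g$ land on that line.

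The main obstacle is the geometric bookkeeping for the analytic continuation along $\gamma$: one has to verify that $\gamma$ can be deformed so that its lift winds precisely once around $u_{1}$ and terminates on the correct sheet of the universal cover, with no stray contributions from loops around $0$ or $\bar{u}_{1}$ and with the arc-at-infinity integral genuinely negligible under the assumption $\deg v\geq 3$. Once this is in place, the pole of $\mu$ at $\theta_{0}+\tau$ directly contradicts the established analyticity of $g$ on $S_{0}^{+}$ and along $\Theta+2^{-1}id$, which establishes the proposition.
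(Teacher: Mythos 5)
The paper does not actually prove this proposition: it is imported, ``rephrased in our notations,'' from Theorem~1 of Bar-Lev, Bshouty and Enis (1992), so there is no in-paper argument to compare yours against, and your attempt has to be judged on whether it could stand as a proof of the cited result. Your skeleton is the right one and almost certainly matches the source in outline: extend $g$ analytically to the strip $\Theta+i\mathbb{R}$, treat $\mu=g'/g$ as meromorphic there, continue $F(u)=\int_{\mu_0}^{u}dt/v(t)$ along a path passing over $u_1$, identify the endpoint $\theta_0+\tau$ as a point where $\mu=\infty$ (a zero of $g$), and then exclude that point from $S_0^{+}$ and from $\Theta+2^{-1}id$.

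The genuine gap is the exclusion step, which is the entire content of the proposition, and your stated reasons for it do not suffice. You claim that ``conjugation symmetry, positivity of $\beta$, and the $id$-periodicity'' prevent $g$ from vanishing in $S_0^{+}$; but a Laplace transform of a positive lattice measure can vanish strictly inside $0<\Im\theta<2^{-1}d$ (for $\beta=\delta_0+\delta_2$ one gets $g(\theta)=1+e^{2\theta}$, which vanishes at $\Im\theta=d/4$), so positivity, symmetry and periodicity alone cannot carry the argument; whatever rules out such zeros must use finer structure of these particular NEFs. Relatedly, the $id$-periodicity of the single-valued $g$ is not ``coming from the residue at $0$'': it is the nontrivial fact that a VF on $(0,\infty)$ with a simple zero of $v$ at the boundary forces $\beta$ to be concentrated on the lattice $v'(0)\mathbb{N}$, which you would also need to establish. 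The treatment of the line $\Theta+2^{-1}id$ is vaguer still (``conjugate-paired values incompatible with a generic complex displacement'' is not an argument), and since the paper's Theorem~\ref{Thm:Done} shows $\Im(\tau)=2^{-1}d$ always, $\theta_0+\tau$ in fact always lies on that line — so the line case is the only one that matters for the conjecture, and it is exactly the case your sketch handles least. Finally, the monodromy step requires the lifted path $F(u(t))$ to stay in $\{\Re\theta<\theta_0\}$ and to avoid zeros of $g$ before its endpoint; you correctly flag this as the ``main obstacle'' but do not resolve it. As written, the proposal identifies the right objects but leaves the decisive exclusion unproved.
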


Instead of computing $\tau$ to determine the sign of $\Re\left(  \tau\right)
$, our strategy is to show $\Im\left(  \tau\right)=2^{-1}d$ and
directly establish the correspondence such that
\begin{equation}
\Re\left(  \tau\right)  <0\text{\ whenever \ }\Re\left(  u_{1}\right)
>0\text{\ for all \ }n\in\mathbb{N}.\label{eq:key}
\end{equation}
From this, an application of \autoref{Thm:BBE} gives $\Re\left(  \tau\right)  \geq0$ when $\left(
v,\mathbb{R}^{+}\right)  $ is a VF, which by \eqref{eq:key} forces $\Re\left(
u_{1}\right)  \leq0$ and yields the necessity. We now proceed to the proof.

\begin{figure}[!]
\centering
\begin{tikzpicture}[scale=0.85,transform shape]
\draw[->] (0,-0.5) -- (0,5.5);  
\draw[->] (-5.5,0) -- (5.5,0);

\node at (0.3,-.3) {$0$}; 
\node at (5.5,-.3) {$x$}; 
\node at (0.3,5.5) {$iy$}; 
\node at (-1.5,2) {$u_{1}$};  

\node at (2,5) {$\Gamma_1$}; %
\node at (0.4,0.7) {$\Gamma_2$};
\node at (-2.5,0.3) {$\Gamma_3$};
\node at (2,0.3) {$\Gamma_4$};

\draw[thick,red,yshift=2pt,
decoration={ markings,  
      mark=at position 0.1 with {\arrow[line width=1pt]{>}},
      mark=at position 0.3 with {\arrow[line width=1pt]{>}},
      mark=at position 0.5 with {\arrow[line width=1pt]{>}},
      mark=at position 0.8 with {\arrow[line width=1pt]{>}},
      mark=at position 0.98 with {\arrow[line width=1pt]{>}}},
      postaction={decorate}]
  (-5,0) -- (-0.5,0) arc (180:0:.5) -- (0.5,0) -- (5,0); 

\draw[thick,red,yshift=2pt,
decoration={ markings,
      mark=at position 0.2 with {\arrow[line width=1pt]{>}},
      mark=at position 0.4 with {\arrow[line width=1pt]{>}},
      mark=at position 0.6 with {\arrow[line width=1pt]{>}},
      mark=at position 0.8 with {\arrow[line width=1pt]{>}}},
      postaction={decorate}]
(5,0) arc (0:180:5); 

\draw [fill] (0,0) circle [radius=2pt];
\draw [fill] (-2,2) circle [radius=2pt]; 
\end{tikzpicture}
\caption[]{Contour $\Gamma$ in the proof of \autoref{Thm:Done}, where the arrow indicates orientation.}
\label{FigContour}
\end{figure}
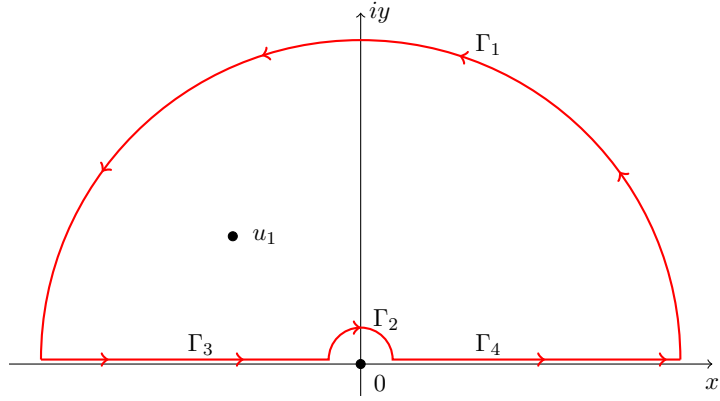

\begin{theorem}
\label{Thm:Done}The necessity is true for all $n\in \mathbb{N}$.
\end{theorem}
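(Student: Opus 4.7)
The plan is to extract $\tau=-2\pi i\operatorname{Res}(1/v,u_{1})$ from a contour integral rather than from an algebraic expansion, which is the bottleneck that blocked the approach of \cite{Lev:1992} for $n\geq 4$. I take the positively oriented contour $\Gamma$ in \autoref{FigContour}: a large semicircle $\Gamma_{1}$ of radius $R$ in the upper half plane, the two real segments $\Gamma_{3},\Gamma_{4}$ from $-R$ to $-\varepsilon$ and from $\varepsilon$ to $R$, and a small semicircular indentation $\Gamma_{2}$ of radius $\varepsilon$ lying above the simple pole of $1/v$ at $0$. Since $\bar{u}_{1}$ sits in the lower half plane, $u_{1}$ is the only pole enclosed, and the residue theorem gives $\oint_{\Gamma}du/v(u)=2\pi i\operatorname{Res}(1/v,u_{1})=-\tau$. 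In the limits $R\to\infty$, $\varepsilon\to 0$, the $\Gamma_{1}$ contribution vanishes because $\deg v=2n+1\geq 3$; the clockwise indentation $\Gamma_{2}$ contributes $-i\pi/v'(0)$ by the standard half-residue formula at a simple pole; and $\Gamma_{3}\cup\Gamma_{4}$ converges to the Cauchy principal value $P=\operatorname{P.V.}\!\int_{-\infty}^{\infty}du/v(u)$, which is real. Thus $\tau=i\pi/v'(0)-P$, which already delivers $\Im(\tau)=\pi/v'(0)=2^{-1}d$ and reduces the proof to showing $\Re(\tau)=-P<0$ whenever $\Re(u_{1})>0$.

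For the sign, set $a=\Re(u_{1})$ and $b=\Im(u_{1})>0$, so that on $\mathbb{R}$ we have $v(u)=a_{0}u\,[(u-a)^{2}+b^{2}]^{n}$. Substituting $u\mapsto -u$ on $(-\infty,0)$ and recombining with the integral over $(0,\infty)$ folds $P$ onto the positive half-line,
\[
P=\int_{0}^{\infty}\frac{1}{a_{0}u}\!\left\{\frac{1}{[(u-a)^{2}+b^{2}]^{n}}-\frac{1}{[(u+a)^{2}+b^{2}]^{n}}\right\}\!du,
\]
where the bracket vanishes at $u=0$ (both terms equal $(a^{2}+b^{2})^{-n}$), hence is $O(u)$, the apparent $1/u$ singularity is absorbed, and the limit defining $P$ exists in the ordinary sense. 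When $a>0$, the elementary inequality $(u-a)^{2}<(u+a)^{2}$ holds for every $u>0$, making the integrand strictly positive on $(0,\infty)$; therefore $P>0$ and $\Re(\tau)<0$, establishing \eqref{eq:key}.

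To conclude, assume $(v,\mathbb{R}^{+})$ is a VF. Then \autoref{Thm:BBE} forces $\theta_{0}+\tau\notin\Theta+2^{-1}id$; but $\Im(\tau)=2^{-1}d$ places $\theta_{0}+\tau$ on the line $\Im z=2^{-1}d$, so the condition collapses to $\theta_{0}+\Re(\tau)\notin(-\infty,\theta_{0})$, i.e.\ $\Re(\tau)\geq 0$. Combined with the contrapositive of \eqref{eq:key}, this forces $\Re(u_{1})\leq 0$, which is the necessity. The main obstacle I anticipate is the careful justification of the two passages to the limit on $\Gamma_{1}$ and $\Gamma_{2}$ and of the convergence of $P$; all three rest on the linear vanishing of the bracket at $u=0$ (for the principal value) and on the degree $2n+1\geq 3$ (for the outer arc), so they are routine once spelled out. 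Once the symmetrization $a\leftrightarrow -a$ is identified, the sign analysis is a one-line inequality, and the real content of the proof lies in replacing the direct residue computation by this contour-integral and symmetrization argument, which is uniform in $n$.
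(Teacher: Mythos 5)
Your proof is correct and follows essentially the same route as the paper's: the same contour $\Gamma$, the same decomposition into the outer arc, the indented semicircle at the origin, and the real segments, the same folding of the real-axis integral onto $(0,\infty)$ with the one-line inequality $(u-a)^{2}<(u+a)^{2}$ for $u>0$, and the same final appeal to \autoref{Thm:BBE} to force $\Re(\tau)\geq 0$. The only (harmless) difference is that you let the indentation radius tend to $0$ and phrase the real-axis contribution as a principal value, whereas the paper keeps $\rho$ fixed; this does not change the argument.
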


\begin{proof}
Without loss of generality, we can assume $a_0=1$.
Pick $\rho$ and $R$ such that $5^{-1}\left\vert u_{1}\right\vert >\rho>0$ and
$R>2\left\vert u_{1}\right\vert $. Define the contour $\Gamma=\bigcup
\nolimits_{i=1}^{4}\Gamma_{i}$ depicted in \autoref{FigContour}, where
$\Gamma_{1}=\left\{  Re^{i\omega}:0\leq\omega\leq\pi\right\}  $, $\Gamma
_{2}=\left\{  \rho e^{i\omega}:\pi\geq\omega\geq0\right\}  $, $\Gamma
_{3}=\left\{  x:-R\leq x\leq-\rho\right\}  $ and $\Gamma_{4}=\left\{
x:\rho\leq x\leq R\right\}  $. Then $\tau=-\int_{\Gamma}\frac{d\zeta}{v\left(
\zeta\right)  }$. However, $\int_{\Gamma_{2}}\frac{d\zeta}{v\left(
\zeta\right)  }=\frac{-i\pi}{v^{\prime}\left(  0\right)  }$, $\lim
\limits_{R\rightarrow\infty}\left\vert \int_{\Gamma_{1}}\frac{d\zeta}{v\left(
\zeta\right)  }\right\vert \leq\lim\limits_{R\rightarrow\infty}\frac{\pi}{6R^{n+1}}%
=0$, and $L_{R}=\int_{\Gamma_{3}\cup\Gamma_{4}}\frac{d\zeta}{v\left(
\zeta\right)  }$ is real. So, $\Im\left(  \tau\right)  =\frac{\pi}{v^{\prime
}\left(  0\right)  }=2^{-1}d$ and
\begin{equation}
\Re\left(  \tau\right)  =-\lim_{R\rightarrow\infty}L_{R}=-\lim_{R\rightarrow
\infty}\int_{\rho}^{R}\alpha\left(  \zeta\right)  d\zeta\text{,}\label{eq:Rep}%
\end{equation}
where
\begin{equation}
\alpha\left(  z\right)  =\frac
{1}{z}\left(  \frac{1}{\left\vert z-u_{1}\right\vert ^{2n}}-\frac
{1}{\left\vert z+u_{1}\right\vert ^{2n}}\right)=
\frac{\vartheta\left(  -z\right)  -\vartheta\left(
z\right)  }{z\vartheta\left(  -z\right)  \vartheta\left(  z\right)  }  \label{eq:integrand}
\end{equation}
and%
\begin{equation}
\vartheta\left(  z\right)  =\left(  z^{2}-2z\Re\left(  u_{1}\right)
+\left\vert u_{1}\right\vert ^{2}\right)  ^{n}\text{.}\label{eq:hattheta}
\end{equation}

It suffices now to show that $\Re\left(  u_{1}\right)  >0$ is untenable. Given
$\Re\left(  u_{1}\right)  >0$, then $\alpha\left(  z\right)  >0$ for
$z\geq\rho$ and $\inf_{z\in\left[  \rho,\rho_{1}\right]  }\alpha\left(
z\right)  >0$ for any finite $\rho_{1}\geq\rho$. So, $\Re\left(
u_{1}\right)  >0$ implies $\Re\left(  \tau\right)  <0$, i.e., (\ref{eq:key})
holds. Let $\theta_{1}=\theta_{0}+\tau$. Then $\theta_{1}
=\theta_{0}+\Re\left(  \tau\right)  +2^{-1}id$. Since $\left(
v,\mathbb{R}^{+}\right)  $ is a VF, \autoref{Thm:BBE} forces
$\theta_{1}\notin\Theta+2^{-1}id$ and $\Re\left(  \tau\right)  \geq0$,
a contradiction. The proof is complete.
\end{proof}

\section*{Acknowledgements}

This research is funded by the Office of Naval Research grant N00014-12-1-0764.
I am very grateful to the Associate Editor and two anonymous referees for valuable suggestions
and comments. I would like to thank John D. Storey, Daoud Bshouty, G\'{e}rard Letac
and Persi Diaconis for support, comments, discussions and encouragements.


\pagebreak
\setcounter{page}{1}
\renewcommand*{\thefootnote}{\fnsymbol{footnote}}
\setcounter{section}{0}
\setcounter{theorem}{0}
\setcounter{equation}{0}

\begin{center}
\title{\vspace{-.5cm}{\LARGE Reduction functions for the variance function of one-parameter\\
\bigskip
natural exponential family}}
\bigskip
\medskip

\author{Xiongzhi Chen\footnote{Center for Statistics and Machine Learning, and
Lewis-Sigler Institute for Integrative Genomics, Princeton University,
Princeton, NJ 08544. Email: \texttt{xiongzhi@princeton.edu}}}
\date{}
\maketitle
\bigskip

\begin{abstract}
In this note, we consider the problem of constructing an unbiased estimator of
the variance of a random variable $\xi$ from a single realization. We show
that, when $\xi$ has parametric distributions that form an infinitely
divisible (i.d.) natural exponential family (NEF) whose induced measure is
absolutely continuous with respect to the basis measure of the NEF, there
exists a deterministic function $h$, called ``reduction function'', such that
$h\left(  \xi\right)  $ is an unbiased estimator of the variance function of
$\xi$. In particular, such an $h$ exists when $\xi$ follows an i.d. NEF
concentrated on the set of nonnegative integers, a NEF with cubic variance
function, or a NEF with power variance function. The utility of the reduction
function is illustrated by an application to estimating the latent linear
space in high-dimensional data.
\medskip
\newline\textit{Keywords}: Natural
exponential family, reduction function, variance function.
\medskip
\newline\textit{MSC 2010 subject classifications}: Primary 60E07; Secondary 62E10.

\end{abstract}
\end{center}

\section{Introduction}

\label{Sec:Intro}

An important task in statistical inference is to construct of a simple,
unbiased estimator of the variance of a random variable (rv). Usually such an
estimator is obtained from multiple realizations of the rv. In the paper, we
explore the possibility of obtaining such an estimator from a transformation
of a single realization of the rv. One motivation behind our study is the need
to adjust for the variances in order to estimate the latent space that
generates the means of the observations; see Theorem 2 of
\cite{Chen:2014intra}. From a theoretical perspective, our study is
essentially classifying probability measures with special variance-mean
relationship (VMR).

Let $\mathbb{E}$ and $\mathbb{V}$ be the mean and variance operators. Stated
formally, we attempt to answer the following the question: for which rv $\xi$
with finite variance does there exist a function $\varphi$ such that
$\mathbb{E}\left[  \varphi\left(  \xi\right)  \right]  =\mathbb{V}\left[
\xi\right]  $? When $\varphi$ exists, it is called a \textquotedblleft
reduction function (RF)\textquotedblright. It can be perceived that the
existence of $\varphi$ depends crucially on the VMR of $\xi$. For example,
when $\xi$ is a Poisson rv $\varphi$ exists and is the identity. However, when
$\xi$ has a different distribution, $\varphi$ may not exist at all.
Unfortunately, a complete answer to the question seems to be out of reach. So,
we focus on the case where $\xi$ has a parametric distribution in the form of
some natural exponential family (NEF) $\mathcal{F}$. This allows partial but
useful results on the existence of $\varphi$ due to the richness and wide
usage of NEFs in probability and statistics \citep{McCullagh:1989,Letac:1992}.

We show that $\varphi$ always exists if the NEF $\mathcal{F}$ is infinitely
divisible (i.d.) and an associated convolution of measures is absolutely
continuous (a.c.) with respect to (wrt) the basis measure of $\mathcal{F}$;
see \autoref{ThmMain}. Our results imply that $\varphi$ exists if $\xi$
follows a NEF with cubic variance function (NEF-CVF, \citealp{Letac:1990}), a
NEF with polynomial variance function (NEF-pVF, \citealp{Lev:1992}), or a NEF
with power variance function (NEF-PVF, \citealp{Lev:1986}). These NEFs cover
many distributions used in statistical modelling.

In its most general form, the existence of $\varphi$ for a NEF $\mathcal{F}$
with basis measure $\beta$ and maximal open parameter space $\Theta$ is
equivalent to the existence of $\varphi$ satisfying the integro-differential
equation%
\[
\int\varphi\left(  x\right)  e^{\theta x}\beta\left(  dx\right)  =\left[
\dfrac{d^{2}}{d\theta^{2}}\log\int e^{\theta x}\beta\left(  dx\right)
\right]  \int e^{\theta x}\beta\left(  dx\right)  \text{, }\forall\theta
\in\Theta.
\]
However, this is a very challenging problem whose complete answer is left for
future research.

\section{Preliminaries on NEFs}

\label{sec:onNEFs}

We review the definition of a NEF and some properties of its mean and variance
functions, which can be found in \cite{Letac:1992}. Let $\beta$ be a positive
Radon measure on $\mathbb{R}$ that is not concentrated on one point. Let
$L\left(  \theta\right)  =\int e^{x\theta}\beta\left(  dx\right)  $ for
$\theta\in\mathbb{R}$ be its Laplace transform and $\Theta$ be the maximal
open set containing $\theta$ such that $L\left(  \theta\right)  <\infty$.
Suppose $\Theta$ is not empty. Then
\[
\mathcal{F}=\left\{  F_{\theta}:F_{\theta}\left(  dx\right)  =\exp\left[
\theta x-\log L\left(  \theta\right)  \right]  \beta\left(  dx\right)
,\theta\in\Theta\right\}
\]
forms a NEF with respect to the basis $\beta$. Without loss of generality
(WLOG), we can assume $0\in\Theta$, so that $\beta\left(  \mathbb{R}\right)
=1$.

Let $\kappa\left(  \theta\right)  =\log L\left(  \theta\right)  $ be the
cumulant function of $\beta$. For the NEF $\mathcal{F}$, the mapping
\begin{equation}
\mu\left(  \theta\right)  =\int xF_{\theta}\left(  dx\right)  =\kappa^{\prime
}\left(  \theta\right)  \label{eq:mean}%
\end{equation}
defines the mean function $\mu:\Theta\rightarrow U$ with $U=\mu\left(
\Theta\right)  $, and $U$ is called the \textquotedblleft mean
domain\textquotedblright. Further, the mapping
\begin{equation}
V\left(  \theta\right)  =\int\left(  x-\mu\left(  \theta\right)  \right)
^{2}F_{\theta}\left(  dx\right)  =\kappa^{\prime\prime}\left(  \theta\right)
\label{eq:var}%
\end{equation}
defines the variance function (VF). Let $\theta=\theta\left(  \mu\right)  $ be
the inverse function of $\mu$. Then $V$ can be parametrized by $\mu$ as
\[
V\left(  \mu\right)  =\int\left(  x-\mu\right)  ^{2}F_{\theta\left(
\mu\right)  }\left(  dx\right)  \text{ for }\mu\in U,
\]
and the pair $\left(  V,U\right)  $ is called the VF of $\mathcal{F}$.

\section{Reduction functions for some infinitely divisible NEFs}

\label{Sec:Representation}

Let $\xi_{\theta}$ with $\theta\in\Theta$ have distribution $F_{\theta}%
\in\mathcal{F}$, then its mean is $\mu\left(  \theta\right)  $ and variance
$V\left(  \theta\right)  $ as defined by (\ref{eq:mean}) and (\ref{eq:var}).
We will identify some $\mathcal{F}$ for which there exits a function $\varphi$
independent of $\theta$ such that
\begin{equation}
\mathbb{E}\left[  \varphi\left(  \xi_{\theta}\right)  \right]  =\mathbb{V}%
\left[  \xi_{\theta}\right]  ,\text{ \ }\forall\theta\in\Theta\text{.}
\label{q3}%
\end{equation}
Note that (\ref{q3}) is equivalent to%
\begin{equation}
\int\varphi\left(  x\right)  e^{x\theta}\beta\left(  dx\right)  =L\left(
\theta\right)  \kappa^{\prime\prime}\left(  \theta\right)  \text{, }%
\forall\theta\in\Theta. \label{q3A}%
\end{equation}

Let $\delta_{y}$ be the Dirac mass at $y\in\mathbb{R}$. We provide simple
sufficient conditions on the existence of $\varphi$.

\begin{proposition}
\label{ThmMain}$\mathcal{F}$ is an i.d. NEF iff $\kappa^{\prime\prime}\left(
\theta\right)  =\int e^{\theta x}\rho\left(  dx\right)  $ where%
\begin{equation}
\rho\left(  dx\right)  =\sigma^{2}\delta_{0}\left(  dx\right)  +x^{2}%
\nu\left(  dx\right)  \label{eqRho}%
\end{equation}
and the L\'{e}vy triple $\left(  \epsilon,\sigma^{2},\nu\right)  $ is given in
(\ref{eqLevy}). Therefore, if $\mathcal{F}$ is i.d., then setting
$\alpha=\beta\ast\rho$ gives the equivalent to (\ref{q3A}) as
\begin{equation}
\int\varphi\left(  x\right)  e^{\theta x}\beta\left(  dx\right)  =\int
e^{\theta x}\alpha\left(  dx\right)  . \label{eqEquiv}%
\end{equation}
Further, if $\alpha\ll\beta$, then $\varphi$ exits as the Radon-Nikodym
derivative $\varphi=\frac{d\alpha}{d\beta}$.
\end{proposition}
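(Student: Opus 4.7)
The plan is to treat the proposition as a dictionary between the Lévy--Khintchine representation of an infinitely divisible measure and the integro-differential form of the reduction-function equation (\ref{q3A}). Since $0\in\Theta$ we have $\beta=F_{0}$, so $\mc{F}$ is i.d.\ iff $\beta$ is. For the forward direction of the first equivalence, I would invoke the Lévy--Khintchine representation: there is a triple $(\epsilon,\sigma^{2},\nu)$ as in (\ref{eqLevy}) such that
$$\kappa\rbk{\theta}=\epsilon\theta+\tfrac{1}{2}\sigma^{2}\theta^{2}+\int\rbk{e^{\theta x}-1-\theta x\mb{1}_{\cbk{\left|x\right|\leq 1}}}\nu\rbk{dx},\qquad\theta\in\Theta.$$
Because $\theta$ lies in the interior of the domain of convergence of $L$, a routine dominated-convergence argument allows differentiating twice under the integral sign, yielding $\kappa''\rbk{\theta}=\sigma^{2}+\int x^{2}e^{\theta x}\nu\rbk{dx}=\int e^{\theta x}\rho\rbk{dx}$ with $\rho$ as in (\ref{eqRho}). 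The converse reverses this: integrating any such representation of $\kappa''$ twice in $\theta$ reproduces the Lévy--Khintchine form, hence the i.d.\ property of $\beta$.

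For the second claim, assume $\mc{F}$ is i.d. The identity that the Laplace transform of a convolution is the product of the Laplace transforms then gives
$$L\rbk{\theta}\kappa''\rbk{\theta}=\rbk{\int e^{\theta x}\beta\rbk{dx}}\rbk{\int e^{\theta x}\rho\rbk{dx}}=\int e^{\theta x}\rbk{\beta\ast\rho}\rbk{dx}=\int e^{\theta x}\alpha\rbk{dx},$$
so (\ref{q3A}) is literally (\ref{eqEquiv}). Finally, if $\alpha\ll\beta$, setting $\varphi=d\alpha/d\beta$ immediately gives $\int\varphi\rbk{x}e^{\theta x}\beta\rbk{dx}=\int e^{\theta x}\alpha\rbk{dx}$ from the defining property of the Radon--Nikodym derivative, so this $\varphi$ solves (\ref{eqEquiv}); uniqueness of Laplace transforms on the open set $\Theta$ pins $\varphi$ down $\beta$-a.e.

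The main obstacle is purely administrative: verifying that $\beta\ast\rho$ has a Laplace transform convergent on all of $\Theta$ (which follows automatically because both $L$ and $\int e^{\theta x}\rho\rbk{dx}=\kappa''\rbk{\theta}$ converge throughout $\Theta$) and that the differentiation under the integral in the Lévy--Khintchine formula is valid there. Both points are standard for the interior of the domain of a Laplace transform, so no conceptual difficulty arises. The proposition is essentially a clean repackaging of Lévy--Khintchine that converts the integro-differential equation (\ref{q3A}) into an explicit Radon--Nikodym problem against the convolution measure $\alpha=\beta\ast\rho$.
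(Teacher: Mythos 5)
Your proposal is correct and follows essentially the same route as the paper: reduce to the basis measure $\beta$ via $0\in\Theta$, use the L\'{e}vy--Khintchine representation to identify $\kappa''(\theta)=\int e^{\theta x}\rho(dx)$ with $\rho=\sigma^{2}\delta_{0}+x^{2}\nu$, convert (\ref{q3A}) to (\ref{eqEquiv}) by the convolution--product identity for Laplace transforms, and finish with the Radon--Nikodym theorem. The only cosmetic difference is that you derive the second-derivative representation by differentiating under the integral, whereas the paper quotes it directly from Theorem 6.2 of Letac (1992).
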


\begin{proof}
Since $0\in\Theta$, then $\beta\in\mathcal{F}$. By Proposition 4.1 of
\cite{Letac:1992}, $\beta$ is i.d iff $\mathcal{F}$ is. By Theorem 6.2 of
\cite{Letac:1992}, $\beta$ is i.d. iff there exist some constants $\epsilon$
and $\sigma^{2}\geq0$ and L\'{e}vy measure $\nu$ such that
\begin{equation}
\kappa\left(  \theta\right)  =\epsilon\theta+2^{-1}\sigma^{2}\theta^{2}%
+\int_{\mathbb{R}\setminus\left\{  0\right\}  }\left(  e^{\theta x}%
-1-\theta\tau\left(  x\right)  \right)  \nu\left(  dx\right)  \label{eqLevy}%
\end{equation}
for a centering function $\tau:\mathbb{R}\rightarrow\mathbb{R}$ such that
$x^{-2}\left(  \tau\left(  x\right)  -x\right)  $ is bounded as $x\rightarrow
0$. Again, by Theorem 6.2 of \cite{Letac:1992}, $\beta$ is i.d. iff
$\kappa^{\prime\prime}\left(  \theta\right)  =\int e^{\theta x}\rho\left(
dx\right)  $ with $\rho$ given by (\ref{eqRho}). Obviously, $\int_{\left\{
\left\vert x\right\vert \geq1\right\}  }x^{-2}\rho\left(  dx\right)  <\infty$
since $\nu$ is a L\'{e}vy measure. Therefore, $\kappa^{\prime\prime}\left(
\theta\right)  g\left(  \theta\right)  =\int e^{\theta x}\alpha\left(
dx\right)  $, and (\ref{q3A}) becomes (\ref{eqEquiv}). When $\alpha\ll\beta$,
we see that $\varphi=\frac{d\alpha}{d\beta}$ by Radon-Nikodym theorem (e.g.,
in \citealp{Halmos:1950}). This completes the proof.
\end{proof}

We call $\rho$ the measure induced by the i.d. NEF $\mathcal{F}$. By
\autoref{ThmMain}, to find $\varphi$ we first need to check if a NEF is i.d.,
and if so, we then check if $\alpha\ll\beta$. We will focus on i.d. NEFs whose
VFs are simple but flexible for statistical applications rather than
exhausting all i.d. NEFs. \cite{Lev:1987} and \cite{Chen:2015expfam} showed
that a NEF-pVF with $V\left(  u\right)  =\sum_{k=1}^{n}a_{k}u^{k}$ and mean
domain $\mathbb{R}_{+}:=\left(  0,\infty\right)  $ is i.d. when all $a_{k}$'s
are nonnegative. In particular, such NEFs include six members of the NEF-CVF
that are concentrated on the set $\mathbb{N}$ of nonnegative integers; see
Table 2 in \cite{Letac:1990}.

We now show the existence and formula for the RF $\varphi$ for i.d. NEFs that
are concentrated on $\mathbb{N}$.

\begin{corollary}
\label{ThmDiscreteN}Suppose $\mathcal{F}$ is i.d. and concentrated on
$\mathbb{N}$. Then
\begin{equation}
c_{n}=\left.  \frac{d^{n}}{dz^{n}}\kappa\left(  \log z\right)  \right\vert
_{z=0} \label{eqseqcn}%
\end{equation}
is well defined for $n\geq1$, and the measure $\alpha$ on $\mathbb{N}$ with%
\begin{equation}
\alpha\left(  \left\{  n\right\}  \right)  =\sum\nolimits_{k=0}^{n}%
\beta\left(  \left\{  n-k\right\}  \right)  \left(  k+2\right)  \left(
k+1\right)  c_{k+2} \label{eqLawConv}%
\end{equation}
is well-defined. Further, $\varphi$ exists and $\varphi\left(  n\right)
=\frac{\alpha\left(  \left\{  n\right\}  \right)  }{\beta\left(  \left\{
n\right\}  \right)  }$ for $n\in\mathbb{N}$.
\end{corollary}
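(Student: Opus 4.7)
The plan is to specialize \autoref{ThmMain} to the lattice setting and do the remaining work with a probability-generating-function calculation. \autoref{ThmMain} already reduces the task to (i)~exhibiting $\alpha=\beta\ast\rho$ in the concrete form (\ref{eqLawConv}), and (ii)~verifying $\alpha\ll\beta$ so that $\varphi=d\alpha/d\beta$ exists and equals $\alpha(\{n\})/\beta(\{n\})$.

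First I would observe that all the measures involved live on $\mathbb{N}$ with $\beta(\{0\})>0$. Since $\mathcal{F}$ is i.d.\ and concentrated on $\mathbb{N}$, the L\'evy--Khintchine representation (\ref{eqLevy}) forces $\sigma^{2}=0$ and $\nu$ to be supported on $\mathbb{N}_{+}$; being a L\'evy measure on a lattice bounded away from $0$, $\nu$ is finite, so $\beta$ is compound Poisson with $\beta(\{0\})=e^{-\nu(\mathbb{N}_{+})}>0$. Hence $\rho(dx)=x^{2}\nu(dx)$ from (\ref{eqRho}) is supported on $\mathbb{N}_{+}$ and $\alpha=\beta\ast\rho$ is supported on $\mathbb{N}$.

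Next I would verify that each $c_{n}$ in (\ref{eqseqcn}) is well-defined and use the resulting Taylor series to extract (\ref{eqLawConv}). The positivity $\beta(\{0\})>0$ makes the probability generating function $G_{\beta}(z):=L(\log z)=\sum_{n\geq 0}\beta(\{n\})z^{n}$ analytic and non-vanishing on a disc around $0$, so $f(z):=\kappa(\log z)=\log G_{\beta}(z)$ is analytic at $z=0$ with expansion $f(z)=\sum_{n\geq 0} c_{n}z^{n}/n!$. Using $\tfrac{d}{d\theta}=z\tfrac{d}{dz}$ with $z=e^{\theta}$ yields $\kappa''(\theta)=z^{2}f''(z)+zf'(z)$, which is a convergent power series in $z$. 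Equating it with $\int e^{\theta x}\rho(dx)=\sum_{k\geq 0}\rho(\{k\})z^{k}$ reads off $\rho(\{k\})$ from the $c_{n}$'s, and the Cauchy product $G_{\beta}(z)G_{\rho}(z)=G_{\alpha}(z)$ then yields (\ref{eqLawConv}).

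Finally, for $\alpha\ll\beta$ I would use a support argument. The support $S:=\text{supp}(\beta)$ of an i.d.\ law on $\mathbb{N}$ is a sub-semigroup of $\mathbb{N}$ containing $\text{supp}(\nu)$; therefore $\text{supp}(\rho)\subseteq S$ and $\text{supp}(\alpha)\subseteq S+S\subseteq S$. Because $\beta$ assigns strictly positive mass to every point of $S$ (any $n\in S$ is a sum of jump sizes and thus charged by some $\nu^{\ast k}$), $\alpha$ vanishes wherever $\beta$ does, which gives $\alpha\ll\beta$ and $\varphi(n)=\alpha(\{n\})/\beta(\{n\})$ by \autoref{ThmMain}. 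The step I expect to be the most delicate is the coefficient bookkeeping in the generating-function identity, since matching the exact combinatorial form $(k+2)(k+1)c_{k+2}$ requires careful manipulation of the Taylor coefficients of $f$, $f'$, and $f''$ and a clean re-indexing of the resulting double sum.
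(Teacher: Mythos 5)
Your overall strategy is essentially the paper's: use the Feller/L\'evy--Khintchine characterization of infinitely divisible laws on $\mathbb{N}$ to expand $\kappa(\log z)$ as a power series with nonnegative coefficients, identify $\rho$ by matching coefficients in $\kappa''(\theta)=\int e^{\theta x}\rho(dx)$, form $\alpha=\beta\ast\rho$ by a Cauchy product, and conclude $\alpha\ll\beta$ by a support argument. Your support argument is in fact more careful than the paper's (which merely asserts $\{n:\alpha_{n}>0\}\subseteq S$), and your observations that $\nu$ is finite and $\beta(\{0\})>0$ are correct.

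The genuine problem is the step you defer as ``delicate coefficient bookkeeping'': it cannot be completed as claimed. Your (correct) identity $\kappa''(\theta)=z^{2}f''(z)+zf'(z)$ with $f(z)=\kappa(\log z)=\sum_{n}c_{n}z^{n}/n!$ yields $\rho(\{k\})=k^{2}c_{k}/k!$, hence $\alpha(\{n\})=\sum_{k\ge 1}\beta(\{n-k\})\,k^{2}c_{k}/k!$, which is not the expression in (\ref{eqLawConv}). The discrepancy is not a reindexing issue: $(k+2)(k+1)c_{k+2}$ is the $k$-th coefficient of $\frac{d^{2}}{dz^{2}}\kappa(\log z)$, whereas what is needed is the $k$-th coefficient of $\bigl(z\frac{d}{dz}\bigr)^{2}\kappa(\log z)$; the paper's own proof conflates these in (\ref{eqLawNcumulant}), since $\kappa(\theta)=\sum_{n}c_{n}e^{n\theta}$ gives $\kappa''(\theta)=\sum_{n}n^{2}c_{n}e^{n\theta}$, not $\sum_{n}(n+2)(n+1)c_{n+2}e^{n\theta}$. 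A Poisson sanity check exposes this: for $\kappa(\theta)=e^{\theta}-1$ only the first coefficient is nonzero, so (\ref{eqLawConv}) gives $\alpha\equiv 0$ and $\varphi\equiv 0$, contradicting $\varphi(x)=x$ in \autoref{TbNEFqvf}, while the corrected formula gives $\rho=\delta_{1}$, $\alpha_{n}=\beta_{n-1}$ and $\varphi(n)=n$ as it should. So your approach is sound, but to finish you must replace (\ref{eqLawConv}) by $\alpha(\{n\})=\sum_{k=1}^{n}\beta(\{n-k\})k^{2}c_{k}/k!$ (equivalently $\sum_{k}\beta(\{n-k\})k^{2}\tilde{c}_{k}$ with $\tilde{c}_{k}$ the $k$-th Taylor coefficient of $\kappa(\log z)$); as stated, the target identity is false and no amount of bookkeeping will reach it.
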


\begin{proof}
Since $\beta$ is concentrated on $\mathbb{N}$, we can write $\beta\left(
dx\right)  =\sum_{n=0}^{\infty}\beta_{n}\delta_{n}\left(  dx\right)  $ with
$\beta_{n}=\beta\left(  \left\{  n\right\}  \right)  $ for each $n\in
\mathbb{N}$. By the criterion on page 290 of \cite{Feller:1971A}, $\beta$ is
i.d. and concentrated on $\mathbb{N}$ iff there exists a non-negative sequence
$\left\{  c_{n}\right\}  _{n\geq1}$ such that $\sum_{n=1}^{\infty}c_{n}z^{n}$
has radius of convergence $R\geq1$ and that $\kappa\left(  \theta\right)
=\sum\nolimits_{n=0}^{\infty}c_{n}e^{n\theta}$ with\ $c_{0}=-\sum
\nolimits_{n=1}^{\infty}c_{n}$ for \ $\theta\in\Theta=\left(  -\infty,\log
R\right)  $. So, the sequence $\left\{  c_{n}\right\}  _{n\geq1}$ is given by
(\ref{eqseqcn}) and
\begin{equation}
\kappa^{\prime\prime}\left(  \theta\right)  =\sum_{n=0}^{\infty}\left(
n+2\right)  \left(  n+1\right)  c_{n+2}e^{n\theta},\text{ }\forall\theta
\in\Theta. \label{eqLawNcumulant}%
\end{equation}
\ Therefore,
\begin{equation}
\rho\left(  dx\right)  =\sum_{n=0}^{\infty}\left(  n+2\right)  \left(
n+1\right)  c_{n+2}\delta_{n}\left(  dx\right)  \label{eqRhoLawN}%
\end{equation}
satisfies $\kappa^{\prime\prime}\left(  \theta\right)  =\int e^{\theta x}%
\rho\left(  dx\right)  $. On the other hand,
\[
L\left(  \theta\right)  =\exp\left(  \kappa\left(  \theta\right)  \right)
=\sum_{n=0}^{\infty}\beta_{n}e^{n\theta},\text{ }\forall\theta\in\Theta
\]
and $\beta_{0}>0$. The measure $\alpha$ with $\alpha_{n}$ in (\ref{eqLawConv})
is well defined. Let $A=\left\{  n\in\mathbb{N}:c_{n}>0\right\}  $ and
$S=\left\{  n\in\mathbb{N}:\beta_{n}>0\right\}  $. Then $\left\{
n\in\mathbb{N}:\alpha_{n}>0\right\}  \subseteq S$ and $\alpha\ll\beta$. Thus,
$\varphi:\mathbb{N}\rightarrow$ $\mathbb{R}_{+}$ such that $\varphi\left(
n\right)  =\alpha_{n}/\beta_{n}$ is the RF. This completes the proof.
\end{proof}

\autoref{ThmDiscreteN} provides the generic formula to obtain the RF $\varphi$
for i.d. NEFs concentrated on $\mathbb{N}$. However, it is not always easy to
obtain the sequences $\left\{  c_{n}\right\}  _{n\geq1}$ and $\left\{
\beta_{n}\right\}  _{n\geq1}$ in the series expansions of $\kappa\left(
\theta\right)  $ and $L\left(  \theta\right)  $ for such a NEF in order to get
$\varphi$; see Examples 4 to 6 on NEF-CVFs in \autoref{secNEFwithCVF}. In the
next two subsections, we will provide the RF $\varphi$ for NEF-CVF and NEF-PVF.

\subsection{RFs for NEFs with cubic variance functions\label{secNEFwithCVF}}

\begin{example}
NEF-QVF. Recall that a NEF-QVF in \cite{Morris:1982} has VF $V\left(
u\right)  =a_{0}+a_{1}u+a_{2}u^{2}$ for some $a_{i}\in\mathbb{R}$ for
$i=0,1,2$. For such a NEF, $\varphi$ can be obtained by directly solving
(\ref{q3A}) without using \autoref{ThmDiscreteN}. In fact, (\ref{q3}) holds
with%
\[
\varphi\left(  t\right)  =\left(  1+a_{2}\right)  ^{-1}\left(  a_{0}%
+a_{1}t+a_{2}t^{2}\right)
\]
when $a_{2}\neq-1$. Note that $a_{2}=-1$ corresponds to a Bernoulli
distribution for which $\varphi$ does not exist. \autoref{TbNEFqvf} gives
$\varphi$ for NEF-QVF and is reproduced from \cite{Chen:2014intra}.
\end{example}

\begin{table}[tbp] \centering

\begin{tabular}
[c]{|l|l|l|l|l|}\hline
Distribution & $\theta$ & $\mathbb{V}[\xi]$ & Reduction function $\varphi(x)$
& Link function $\eta(\theta)$\\\hline
Normal$(\vartheta,1)$ & $\vartheta$ & 1 & $1$ & $\theta$\\
Poisson$(\vartheta)$ & $\vartheta$ & $\theta$ & $x$ & $\log(\theta)$\\
Binomial$(m,\vartheta)$ & $m\tau$ & $\theta-\theta^{2}/m$ & $(mx-x^{2})/(m-1)$
& ${\text{logit}}(\theta/m)$\\
NegBin$(m,\vartheta)$ & $m\tau/(1-\vartheta)$ & $\theta+\theta^{2}/m$ &
$(mx+x^{2})/(m+1)$ & $\log\left[  \theta/(m+\theta)\right]  $\\
Gamma$(m,\vartheta)$ & $m/\vartheta$ & $\theta^{2}/m$ & $x^{2}/(1+m)$ &
$-1/\theta$\\
GHS$(m,\vartheta)$ & $m\tau$ & $m+\theta^{2}/m$ & $(m^{2}+x^{2})/(1+m)$ &
$\arctan(\theta/m)$\\\hline
\end{tabular}
\caption{The reduction function $\varphi$  such that $\mathbb{E}[\varphi
(\xi)]=\mathbb{V}[\xi]$ when the parametric distribution of $\xi
$ forms $\mathcal{F}=\left\{
F_{\theta}:\theta\in\Theta\right\}  $, a NEF-QVF in \cite{Morris:1982}.
Here ${\text{logit}}(x)=\log\left(  x/\left(  1-x\right)\right)  $ for
$x\in(0,1)$, $m\in\mathbb{N}$ and it is bigger than $1$ for Binomial
distributions, and \textquotedblleft GHS\textquotedblright\ stands for
\textquotedblleft generalized hyperbolic secant distribution\textquotedblright
. Note that GHS is called ``hyperbolic cosine'' by \cite{Letac:1990}.}%
\label{TbNEFqvf}
\end{table}

For a NEF-CVF with $\deg V=3$, directly solving (\ref{q3A}) for $\varphi$ is
no longer preferred. However, when $\deg V=3$, the corresponding NEF is i.d.
with mean domain $U=\left(  0,\infty\right)  $. So, we will use
\autoref{ThmMain} or \autoref{ThmDiscreteN} to derive $\varphi$. The following
three NEF-CVFs are concentrated on $\mathbb{N}$ and generated by analytic
functions; see Proposition 4.3 of \cite{Letac:1990}.

\begin{example}
Inverse Gaussian. Its VF is $V\left(  u\right)  =u^{3}$ and
\[
\beta\left(  x\right)  =\left(  2\pi\right)  ^{-1/2}x^{-3/2}\exp\left(
-1/\left(  2x\right)  \right)  1_{\left(  0,\infty\right)  }dx.
\]
So, $\kappa\left(  \theta\right)  =-\sqrt{-2\theta}$ with $\theta<0$, and
$\rho\left(  dx\right)  =\left(  2\pi\right)  ^{-1/2}x^{1/2}1_{\left(
0,\infty\right)  }\left(  x\right)  dx$. Take $E\in\mathcal{B}\left(
\mathbb{R}_{+}\right)  $, then $\beta\left(  E\right)  =0$ iff the Lebesgue
measure of $E$ is $0$, i.e., $\chi\left(  E\right)  =0$. Since $\chi$ is
translation invariant, $\int_{0}^{\infty}1_{E}\left(  x+y\right)  \beta\left(
dx\right)  =0$ if $\beta\left(  E\right)  =0$ for any $y>0$. This implies
$\alpha=\beta\ast\rho$ satisfies $\alpha\ll\beta$. The RF $\varphi$ can be
found by computing the inverse Laplace transform $\hat{L}_{\kappa}$ of
$L\left(  z\right)  \kappa^{\prime\prime}\left(  z\right)  $ for $z$ such that
$\Re\left(  z\right)  =\theta\in\Theta$. Details are given below. Since%
\[
L\left(  z\right)  \kappa^{\prime\prime}\left(  z\right)  =\left(  -2z\right)
^{-3/2}\exp\left(  -\sqrt{-2z}\right)  ,
\]
we see%
\[
\left\vert L\left(  z\right)  \kappa^{\prime\prime}\left(  z\right)
\right\vert \leq\left(  2\left\vert z\right\vert \right)  ^{-3/2}%
\sim\left\vert \Im\left(  z\right)  \right\vert ^{-3}\text{ \ as \ }\left\vert
\Im\left(  z\right)  \right\vert \rightarrow\infty
\]
uniformly for $\theta$ in any compact subset of $\left(  -\infty,0\right)  $.
So, by Theorem 19a of \cite{Widder:1946} $\hat{L}_{\kappa}$ is well-defined.
In fact,%
\[
\hat{L}_{\kappa}\left(  x\right)  =\left(  2\pi\right)  ^{-1}\left(
\sqrt{2\pi}\exp\left(  -2^{-1}x^{-1}\right)  -\pi x^{-1/2}+\pi\Phi\left(
2^{-1/2}x^{-1/2}\right)  \right)  ,
\]
where $\Phi\left(  x\right)  =2\pi^{-1/2}\int_{0}^{x}\exp\left(
-t^{2}\right)  dt$. Therefore, $\varphi\left(  x\right)  =\frac{\hat
{L}_{\kappa}\left(  x\right)  }{f\left(  x\right)  }1_{\left(  0,\infty
\right)  }$ is the RF.
\end{example}

\begin{example}
Strict Arcsine. Its VF is $V\left(  u\right)  =u\left(  1+u^{2}\right)  $ and
$\beta\left(  dx\right)  =\sum_{n=0}^{\infty}p_{n}\left(  1\right)
\dfrac{\delta_{n}\left(  dx\right)  }{n!}$, where%
\begin{equation}
p_{2n}\left(  t\right)  =\prod_{k=0}^{n-1}\left(  t^{2}+4k^{2}\right)  \text{
\ and \ }p_{2n+1}\left(  t\right)  =t\prod_{k=0}^{n-1}\left(  t^{2}+\left(
2k+1\right)  ^{2}\right)  . \label{eqPcoef}%
\end{equation}
The generating function of $\beta$ is $f\left(  z\right)  =\exp\left(  \arcsin
z\right)  $ for $\left\vert z\right\vert \leq1$ and $\kappa\left(
\theta\right)  =\arcsin e^{\theta}$ with $\theta<0$. By \cite{kokonendji1994},
$\rho\left(  dx\right)  =\sum\nolimits_{n=1}^{\infty}\delta_{2n+1}\left(
dx\right)  $.
\end{example}

\begin{example}
Ressel family. Its VF is $V\left(  u\right)  =u^{2}\left(  1+\mu\right)  $ and
$\beta\left(  dx\right)  =\dfrac{x^{x}e^{-x}}{\Gamma\left(  x+2\right)
}1_{\left(  0,\infty\right)  }dx$. By Lemma 4.1 of \cite{kokonendji1994},
$\rho\left(  dx\right)  =\sum_{n=0}^{\infty}\frac{\left(  n+2\right)  !}%
{2!n!}\beta^{\ast\left(  n+2\right)  }$, where $\beta^{\ast m}$ means the
convolution of $\beta$ by itself $m$ times.
\end{example}

The next three NEF-CVFs with $\deg V=3$ are concentrated on $\mathbb{N}$ and
generated by a composition of two analytic functions; see Theorem 4.5 of
\cite{Letac:1990}. To compute $\rho$, we reformulate Theorem 4.5 of
\cite{Letac:1990} and Lemma 4.2 of \cite{kokonendji1994} as follows.

\begin{proposition}
\label{PropLetacKokonendji}Let $g\left(  z\right)  =\sum_{n=0}^{\infty}%
g_{n}z^{n}$ with radius of convergence $R\left(  g\right)  \geq1$ such that
$g_{n}\geq0$ and $g_{0}g_{1}\neq0$. Then there exits some $0<R\left(
h\right)  <R\left(  g\right)  $ such that $h\left(  w\right)  -wg\left(
h\left(  w\right)  \right)  =0$ implicitly defines an analytic function $h$
for $\left\vert w\right\vert <R\left(  h\right)  $. Let
\[
\beta_{n}=\frac{1}{\left(  n+1\right)  !}\left.  \left(  \frac{d}{dz}\right)
^{n}\left(  g\left(  z\right)  \right)  ^{n+1}\right\vert _{z=0}%
\]
If $\mu_{n}\geq0$ for all $n\in\mathbb{N}$, then the measure $\beta\left(
dx\right)  =\sum_{n=0}^{\infty}\beta_{n}\delta_{n}\left(  dx\right)  $ is
well-defined and
\[
g\left(  h\left(  w\right)  \right)  =\frac{h\left(  w\right)  }{w}=\sum
_{n=0}^{\infty}\beta_{n}w^{n}.
\]
Further, setting $w=e^{\theta}$ gives%
\begin{equation}
\kappa^{\prime\prime}\left(  \theta\right)  =\left(  1-\frac{hg^{\prime}}%
{g}\right)  ^{-3}\frac{h\left(  gg^{\prime}+hgg^{\prime\prime}-hg^{\prime
2}\right)  }{g^{2}}, \label{eqLTRho}%
\end{equation}
where the derivative is taken wrt to $\theta$.
\end{proposition}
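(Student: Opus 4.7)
The plan is to address the three components of the proposition in sequence: (i) the analytic existence of the implicitly defined $h$; (ii) the Taylor identification $h(w)/w = g(h(w)) = \sum_n \beta_n w^n$ with the stated $\beta_n$; and (iii) the closed form for $\kappa''(\theta)$ when $w = e^\theta$. Parts (i) and (ii) are standard applications of the analytic implicit function theorem and the Lagrange inversion formula respectively; part (iii) is a chain-rule computation that, while elementary, needs care to arrive at the compact form stated.

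For (i), set $F(w,h) = h - wg(h)$. Then $F(0,0) = 0$ and $\partial_h F(0,0) = 1 \neq 0$, so the analytic implicit function theorem delivers a unique analytic $h$ on some disk $|w| < R(h)$ with $h(0) = 0$ solving $h = wg(h)$. Shrinking $R(h)$ if necessary ensures $|h(w)| < R(g)$ throughout, keeping $g(h(w))$ and its derivatives analytic. The hypothesis $g_0 g_1 \neq 0$ forces $h'(0) = g_0 > 0$, so $h$ is nontrivial and $h/w$ extends analytically to $w = 0$, and it also makes $1 - hg'/g$ nonvanishing at $w = 0$, which matters for (iii).

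For (ii), apply the Lagrange inversion formula to $h = wg(h)$ to obtain $[w^n] h(w) = \tfrac{1}{n}[z^{n-1}] g(z)^n$ for $n \geq 1$. Dividing the defining relation by $w$ yields $h(w)/w = g(h(w))$, so
\[
g(h(w)) = \frac{h(w)}{w} = \sum_{n \geq 0} [w^{n+1}] h(w) \cdot w^n = \sum_{n \geq 0} \frac{1}{n+1} [z^n] g(z)^{n+1} \cdot w^n,
\]
and rewriting the coefficient extraction as a Taylor derivative at $0$ reproduces the $\beta_n$ in the statement. Since $g_k \geq 0$, each $g(z)^{n+1}$ has nonnegative Taylor coefficients, hence $\beta_n \geq 0$ automatically, and $\beta = \sum_n \beta_n \delta_n$ is a well-defined positive measure on $\mathbb{N}$ whose generating series converges on $|w| < R(h)$.

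For (iii), set $w = e^\theta$ so that $L(\theta) = g(h(w)) = h(w)/w$ and $\kappa = \log L$. Implicit differentiation of $h = wg(h)$ gives $h'(w) = g/(1 - wg')$, and substituting $w = h/g$ simplifies this to $h'(w) = g^2/(g - hg')$, with $g, g', g''$ all evaluated at $h(w)$. Using $dw/d\theta = w$ and the chain rule, a short computation yields $\kappa'(\theta) = hg'/(g - hg')$. Setting $u := hg'/g$ collapses this to $\kappa'(\theta) = u/(1-u)$, so $\kappa''(\theta) = (1-u)^{-2}\, du/d\theta$; differentiating $u$ in $h$, multiplying by $h'(w) \cdot w$, and collecting terms produces the advertised numerator $h(gg' + hgg'' - hg'^2)$ together with the factor $(1 - hg'/g)^{-3}/g^2$. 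The principal obstacle is not conceptual but clerical: the chain-rule cascade $\theta \mapsto w \mapsto h \mapsto g(h)$ is error-prone, and spotting the grouping $gg' + hgg'' - hg'^2$ demands careful bookkeeping. I would work entirely in the $w$-variable via $d/d\theta = w\, d/dw$, retain the auxiliary $u = hg'/g$ to collapse the first derivative cleanly, and substitute $w = h/g$ only at the last step to recover the stated form.
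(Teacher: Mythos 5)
Your proposal is correct, and it is worth noting that the paper does not actually prove this proposition: it is presented as a reformulation of Theorem 4.5 of Letac and Mora (1990) and Lemma 4.2 of Kokonendji and Seshadri (1994), with the reader sent to those references. Your argument therefore supplies something the paper omits, namely a self-contained derivation from the analytic implicit function theorem, Lagrange inversion, and a chain-rule computation. I checked the computation in part (iii): with $u=hg'/g$ one gets $\kappa'=u/(1-u)$, $wh'(w)=h/(1-u)$, and $du/dh=\bigl(gg'+hgg''-hg'^2\bigr)/g^2$, which assemble to exactly \eqref{eqLTRho}; this is also consistent with \eqref{eqGFrho} upon substituting $x=h$. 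Your derivation incidentally resolves an ambiguity in the statement: the parenthetical ``the derivative is taken wrt to $\theta$'' cannot be read literally, since the primes on $g$ in \eqref{eqLTRho} must denote derivatives of $g$ with respect to its own argument, evaluated at $h(e^{\theta})$ --- precisely the convention your computation uses and that \eqref{eqGFrho} confirms. Two small further points in your favor: the Lagrange inversion step correctly converts $[z^m]g(z)^{m+1}/(m+1)$ into the stated $\beta_m$ (and the hypothesis ``$\mu_n\ge 0$'' in the statement is evidently a typo for $\beta_n\ge 0$, which you rightly observe holds automatically since $g_n\ge 0$); and the nonvanishing of $1-hg'/g$ near $w=0$, which you check via $g_0g_1\neq 0$, is needed for \eqref{eqLTRho} to make sense. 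The trade-off is the usual one: the paper's citation buys brevity, while your route makes the result verifiable without consulting the two references.
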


Since $\kappa^{\prime\prime}\left(  \theta\right)  =\int e^{x\theta}%
\beta\left(  dx\right)  $, taking $h\left(  e^{\theta}\right)  $ in
(\ref{eqLTRho}) as the argument gives the generating function $f_{\rho}$ for
$\rho$ as
\[
f_{\rho}\left(  z\right)  =H_{\rho}\left(  h\left(  z\right)  \right)
=\sum_{n=0}^{\infty}w^{n}\rho\left(  \left\{  n\right\}  \right)
\]
and%
\begin{equation}
H_{\rho}\left(  x\right)  =x\left(  1-\frac{xg^{\prime}\left(  x\right)
}{g\left(  x\right)  }\right)  ^{-3}\left(  \frac{g^{\prime}\left(  x\right)
}{g\left(  x\right)  }+\frac{xg^{\prime\prime}\left(  x\right)  }{g\left(
x\right)  }-x\left(  \frac{g^{\prime}\left(  x\right)  }{g\left(  x\right)
}\right)  ^{2}\right)  . \label{eqGFrho}%
\end{equation}
Note that (\ref{eqGFrho}) is also given in \cite{kokonendji1994}. However,
computing $f_{\rho}^{\left(  n\right)  }\left(  0\right)  $ from
(\ref{eqGFrho}) in order to get $\rho\left(  \left\{  n\right\}  \right)  $
can be quite tedious.

\autoref{PropLetacKokonendji} provides $\rho$ and hence $\varphi$ for the following:

\begin{example}
Abel family. Its VF is $V\left(  u\right)  =u\left(  1+u\right)  ^{2}$ and
$\beta\left(  dx\right)  =\sum\nolimits_{n=0}^{\infty}\beta_{n}\delta
_{n}\left(  dx\right)  $ with $\beta_{n}=\left(  1+n\right)  ^{n-1}/n!$.
$\beta$ is induced by $g\left(  h\right)  =e^{h}$ and $h\left(  w\right)
=\sum_{n=0}^{\infty}\beta_{n}w^{n+1}$.
\end{example}

\begin{example}
Tak\'{a}cs family. Its VF is $V\left(  u\right)  =u\left(  1+u\right)  \left(
1+2u\right)  $ and $\beta\left(  dx\right)  =\sum\nolimits_{n=1}^{\infty}%
\beta_{n}\delta_{n}\left(  dx\right)  $ with $\beta_{n}=\frac{\left(
2n\right)  !}{n!\left(  n+1\right)  !}$. $\beta$ is induced by $g\left(
h\right)  =\left(  1-h\right)  ^{-1}$ with $h\left(  w\right)  =\sum
_{n=0}^{\infty}\beta_{n}w^{n+1}$.
\end{example}

\begin{example}
Large Arcsine. Its VF is $V\left(  u\right)  =u\left(  1+2u+2u^{2}\right)  $
and $\beta\left(  dx\right)  =\sum\nolimits_{n=0}^{\infty}\beta_{n}\delta
_{n}\left(  dx\right)  $ with $\beta_{n}=\frac{p_{n}\left(  1+n\right)
}{\left(  n+1\right)  !}$, where $p_{n}\left(  t\right)  $ is defined in
(\ref{eqPcoef}). $\beta$ is induced by $g\left(  h\right)  =\exp\left(
\arcsin h\right)  $ with $h\left(  w\right)  =\sum_{n=1}^{\infty}\beta
_{n}w^{n+1}$.
\end{example}

\subsection{RFs for NEFs with power variance functions\label{secNEFwithPVF}}

Let $D=\left\{  0\right\}  \cup\lbrack1,\infty)$. \cite{Lev:1986} showed that
for a $V\left(  u\right)  =au^{r}$ for $a>0$ and $r\in\mathbb{R}$ to be a VF
of some NEF, it is necessary that $r\in D$. They further showed that NEF-PVFs
are i.d. It should be pointed out that NEFs with PVFs include the Tweedie
family of \cite{Tweedie:1984}. It is easy to see that $U=\mathbb{R}$ iff $r=0$
and that $U=\mathbb{R}_{+}$ when $r\in D\setminus\left\{  0\right\}  $. For
more information on NEF-PVF, we refer the reader to \cite{Lev:1986}. Since
NEF-PVF with $r\in\mathbb{N}$ corresponds to a NEF-pVF discussed in
\autoref{ThmDiscreteN} and \autoref{secNEFwithCVF}, we will only show the
existence of $\varphi$ when $r\in D\setminus\mathbb{N}$.

\begin{corollary}
Suppose $\mathcal{F}$ is a NEF-PVF with VF $V\left(  u\right)  =au^{r}$ for
$r\in D\setminus\mathbb{N}$. Then $\alpha\ll\beta$ and $\varphi=\frac{d\alpha
}{d\beta}$.
\end{corollary}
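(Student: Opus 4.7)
The plan is to invoke \autoref{ThmMain} and reduce the problem to verifying that the convolution $\alpha=\beta\ast\rho$ is absolutely continuous with respect to $\beta$. I would split into two cases according to $r=0$ and $r\in[1,\infty)\setminus\mathbb{N}$, since these correspond to structurally different L\'evy--Khintchine representations of the basis measure.

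The case $r=0$ is immediate: here $V\equiv a$ is constant, $\mathcal{F}$ is the Gaussian family, $\kappa$ is a quadratic polynomial, and the L\'evy measure $\nu$ in (\ref{eqLevy}) vanishes. Hence $\rho=a\delta_{0}$, $\alpha=a\beta\ll\beta$, and the reduction function is the constant $\varphi\equiv a$.

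For $r\in[1,\infty)\setminus\mathbb{N}$ the key structural input I would import from \cite{Lev:1986} is the L\'evy triple of $\beta$: one has $\sigma^{2}=0$ and the L\'evy measure $\nu$ on $(0,\infty)$ is absolutely continuous with respect to Lebesgue measure, with a strictly positive continuous density (a tempered power of the form $c\,x^{-\gamma-1}e^{-\lambda x}$, where $\gamma$ is determined by $r$). Consequently $\rho(dx)=x^{2}\nu(dx)$ is absolutely continuous with respect to Lebesgue measure on $(0,\infty)$. A standard translation-invariance argument then shows that $\alpha=\beta\ast\rho$ inherits this property on $(0,\infty)$: if $E\subset(0,\infty)$ has Lebesgue measure zero then $\rho(E-y)=0$ for every $y\ge 0$, whence $\alpha(E)=\int\rho(E-y)\,\beta(dy)=0$.

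To upgrade absolute continuity with respect to Lebesgue measure to the required $\alpha\ll\beta$, I would invoke the additional fact (again from \cite{Lev:1986}) that the basis measure of a NEF-PVF with $r\in[1,\infty)\setminus\mathbb{N}$ is equivalent to Lebesgue measure on $(0,\infty)$: for $r\in(1,2)$ it is an atom at $0$ plus a strictly positive density on $(0,\infty)$, and for non-integer $r>2$ it is a strictly positive density on $(0,\infty)$. Therefore every $\beta$-null Borel set is Lebesgue-null in $(0,\infty)$, giving $\alpha(E)=0$ whenever $\beta(E)=0$, and the Radon--Nikodym theorem yields $\varphi=d\alpha/d\beta$. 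The main obstacle is not the measure-theoretic manipulation but the careful extraction of the L\'evy triple and the basis-measure structure across the sub-regimes $r\in(1,2)$, $r\in(2,3)$, and non-integer $r>3$, for which I would rely on the explicit formulas and the infinite divisibility established in \cite{Lev:1986}; once those inputs are secured, the absolute-continuity conclusion is purely measure-theoretic.
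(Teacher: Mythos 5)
Your argument is correct, and it reaches the conclusion by a genuinely different route from the paper's. The paper splits into the same two regimes ($r\in(1,2)$ versus non-integer $r>2$) but \emph{explicitly computes} the measures involved: it inverts the Laplace transforms $L(\theta)=\exp\left(C_{0,r}\theta^{\gamma}\right)$ and $L(\theta)\kappa''(\theta)$ term by term to exhibit series densities $q$ and $\tilde{q}$ for $\beta$ (plus an atom at $0$ when $r\in(1,2)$) and for $\alpha$, and for $r>2$ represents $\beta$ by a scaled positive stable law and takes $\rho(dx)=C_{2,r}\,x^{1-\gamma}\Gamma(2-\gamma)^{-1}1_{(0,\infty)}\,dx$; the relation $\alpha\ll\beta$ is then read off from these explicit representations. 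You instead argue qualitatively from the L\'{e}vy--Khintchine data: $\sigma^{2}=0$ and $\nu\ll\mathrm{Leb}$ give $\rho\ll\mathrm{Leb}$ on $(0,\infty)$, translation invariance of Lebesgue measure gives $\alpha=\beta\ast\rho\ll\mathrm{Leb}$, and equivalence of $\beta$ with Lebesgue measure on $(0,\infty)$ closes the loop. This is cleaner and, notably, makes explicit a step the paper glosses over: exhibiting a Lebesgue density for $\alpha$ does not by itself yield $\alpha\ll\beta$ --- one also needs $\mathrm{Leb}|_{(0,\infty)}\ll\beta$, i.e.\ strict positivity of the basis density, which is exactly the fact you isolate (and which you must, as you note, still extract from \cite{Lev:1986}, where it is standard for these Tweedie-type families). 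Two minor remarks: the case $r=0$ in your proof is vacuous, since the paper's convention puts $0\in\mathbb{N}$, so $D\setminus\mathbb{N}$ consists only of non-integer $r>1$; and your route produces no explicit formula for $\varphi$ (the paper at least gives one at $r=3/2$), but the corollary only asserts existence of the Radon--Nikodym derivative, so nothing required is lost.
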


\begin{proof}
WLOG, assume $a=1$. Then $\theta=\left(  1-r\right)  ^{-1}u^{1-r}$, $\theta<0$
and $u=\left(  \left(  1-r\right)  \theta\right)  ^{\frac{1}{1-r}}$. Let
$\gamma=\frac{2-r}{1-r}$. Set $C_{0,r}=\left(  2-r\right)  ^{-1}\left(
1-r\right)  ^{\gamma}$ and $C_{2,r}=\left(  1-r\right)  ^{\gamma-2}$.

\textbf{Case 1}: $r\in\left(  1,2\right)  $, i.e., $\gamma<0$. Then
$\kappa\left(  \theta\right)  =\left(  2-r\right)  ^{-1}u^{2-r}=C_{0,r}%
\theta^{\gamma}$ and $\kappa^{\prime\prime}\left(  \theta\right)
=C_{2,r}\theta^{\gamma-2}$. So,
\[
L\left(  \theta\right)  =\exp\left(  C_{0,r}\theta^{\gamma}\right)
=\sum_{n=0}^{\infty}\frac{1}{n!}C_{0,r}^{n}\theta^{n\gamma}%
\]
and%
\[
L\left(  \theta\right)  \kappa^{\prime\prime}\left(  \theta\right)
=C_{2,r}\theta^{\gamma-2}\sum_{n=0}^{\infty}\frac{1}{n!}\theta^{n\gamma
}C_{0,r}^{n}=\sum_{n=0}^{\infty}\frac{1}{n!}C_{2,r}C_{0,r}^{n}\theta
^{n\gamma+\gamma-2}.
\]
Since $\Gamma\left(  t\right)  =\int_{0}^{\infty}e^{-x}x^{t-1}dx$ for
$\Re\left(  t\right)  >0$, \ we see $\left(  -\theta\right)  ^{-t}=\frac
{1}{\Gamma\left(  t\right)  }\int_{0}^{\infty}x^{t-1}e^{\theta x}dx$.
Therefore, setting%
\[
q\left(  x\right)  =\sum_{n=1}^{\infty}\frac{\left(  -1\right)  ^{n\gamma
}C_{0,r}^{n}x^{-n\gamma-1}}{n!\Gamma\left(  -n\gamma\right)  }%
\]
and%
\[
\beta\left(  dx\right)  =\delta_{0}\left(  dx\right)  +q\left(  x\right)
1_{\left(  0,\infty\right)  }\left(  x\right)  dx
\]
gives $L\left(  \theta\right)  =\int_{0}^{\infty}e^{\theta x}\beta\left(
dx\right)  $. Similarly, setting $\alpha\left(  dx\right)  =\tilde{q}\left(
x\right)  dx$ with
\[
\tilde{q}\left(  x\right)  =\sum\nolimits_{n=0}^{\infty}\frac{\left(
-1\right)  ^{n\gamma+\gamma-2}C_{2,r}C_{0,r}^{n}x^{-n\gamma-\gamma+1}%
}{n!\Gamma\left(  -n\gamma-\gamma+2\right)  }1_{\left(  0,\infty\right)
}\left(  x\right)
\]
gives $L\left(  \theta\right)  \kappa^{\prime\prime}\left(  \theta\right)
=\int_{0}^{\infty}e^{\theta x}\alpha\left(  dx\right)  $. Further,
$\alpha=\beta\ast\rho$ satisfies $\alpha\ll\beta$ and $\varphi=\frac{d\alpha
}{d\beta}$.

It is easily seen that when $\gamma=-1$, i.e., $r=3/2$,
\[
\frac{d^{3}}{dx^{3}}\frac{x^{-n\gamma-\gamma+1}}{\Gamma\left(  -n\gamma
-\gamma+2\right)  }=\frac{x^{-n\gamma-1}}{\Gamma\left(  -n\gamma\right)  },
\]
which implies $q\left(  x\right)  =\left(  -1\right)  ^{\gamma-2}C_{2,r}%
\tilde{q}^{\left(  3\right)  }\left(  x\right)  $ and $\alpha\ll\beta$. So,
$h=\frac{d\alpha}{d\beta}$ when $r=3/2$. Explicitly, $C_{0,3/2}=-4$,
$C_{2,3/2}=-8$.
\[
q\left(  x\right)  =\delta_{0}\left(  dx\right)  +\sum_{n=1}^{\infty}%
\frac{4^{n}x^{-n-1}}{n!\left(  n-1\right)  !}1_{\left(  0,\infty\right)
}\left(  x\right)  dx
\]
and $\alpha\left(  dx\right)  =\sum\nolimits_{n=0}^{\infty}\frac{4^{n+2}}%
{n!}\frac{x^{n+2}}{\left(  n+2\right)  !}1_{\left(  0,\infty\right)  }\left(
x\right)  $. Clearly, $\alpha\ll\beta$, we have $\varphi=\frac{d\alpha}%
{d\beta}$.

\textbf{Case 2}: $r>2$, i.e., $0<\gamma<1.$ By the proof of Theorem 4.1 of
\cite{Lev:1986}, we can do the following. Let $G_{\gamma}$ be a stable
distribution with characteristic component $\gamma$ and let $\tilde{G}%
_{\gamma}\left(  x\right)  =G_{\gamma}\left(  x/\hat{a}\right)  $ for
$x\in\left(  0,\infty\right)  $, where $\hat{a}=\gamma^{-1/\gamma}\left(
1-\gamma\right)  ^{1/\gamma-1}$. Then $\tilde{G}_{\gamma}$ has Laplace
transform $L\left(  \theta\right)  =\exp\left(  -\left(  \hat{a}\theta\right)
^{\gamma}\right)  $ for $\theta\leq0$, $\kappa\left(  \theta\right)  =$
$C_{0,r}\left(  -\theta\right)  ^{\gamma}$ and $\kappa^{\prime\prime}\left(
\theta\right)  =C_{2,r}\left(  -\theta\right)  ^{\gamma-2}$. In fact, we can
set
\[
q\left(  x\right)  =-\frac{1}{\pi}\sum_{n=0}^{\infty}\frac{\left(  -1\right)
^{n}\sin\left(  n\pi\gamma\right)  }{n!}\frac{\Gamma\left(  1+\gamma n\right)
}{x^{1+\gamma n}}1_{\left(  0,\infty\right)  }\left(  x\right)  .
\]
and $\beta\left(  dx\right)  =\tilde{G}_{\gamma}\left(  dx\right)  =\left(
1/\hat{a}\right)  q\left(  x/\hat{a}\right)  dx$; see Section 5 of
\cite{Lev:1986}. Clearly,
\[
\rho\left(  dx\right)  =C_{2,r}\frac{x^{1-\gamma}}{\Gamma\left(
2-\gamma\right)  }1_{\left(  0,\infty\right)  }dx
\]
gives $\kappa^{\prime\prime}\left(  \theta\right)  =\int e^{\theta x}%
\rho\left(  dx\right)  $. So, $\alpha=\beta\ast\rho$ satisfies $\alpha\ll
\beta$ and $\varphi=\frac{d\alpha}{d\beta}$.
\end{proof}

\section{An application to estimating latent linear space\label{secApp}}

We provide an application of the RF $\varphi$ to estimating the latent space
in the means of high-dimensional data. For $k\gg n$ the data matrix
$\mathbf{Y}=\left(  y_{ij}\right)  \in\mathbb{R}^{k\times n}$ contains $n$
independent measurements of $k$ objects under study. Suppose the means
$\mathbb{E}\left[  y_{ij}\right]  $ are subject to a latent structure such
that%
\begin{equation}
\mathbb{E}\left[  \mathbf{Y}|\mathbf{M}\right]  =\left(  \mathbb{E}\left[
y_{ij}|\mathbf{M}\right]  \right)  =\mathbf{\Phi M} \label{eqModel}%
\end{equation}
for an unknown $\mathbf{M}\in\mathbb{R}^{r\times n}$ of rank $1\leq r\leq n$
and unknown $\mathbf{\Phi}\in\mathbb{R}^{k\times r}$ of rank $r$. In
(\ref{eqModel}), $\mathbf{M}$ can represent a design matrix or $n$
realizations of $r$ factors or $r$ latent variables. Then all $\mathbb{E}%
\left[  y_{ij}\right]  $ lie in the row space $\mathcal{S}_{\mathbf{M}}$ of
$\mathbf{M}$, and capturing $\mathcal{S}_{\mathbf{M}}$ is essential to
understanding the systematic variations in the data $\mathbf{Y}$. By Theorem 2
of \cite{Chen:2014intra}, $\mathcal{S}_{\mathbf{M}}$ can be consistently
estimated by the space spanned by the $r$ leading eigenvectors of%
\begin{equation}
\mathbf{G}_{k}=k^{-1}\mathbf{Y}^{T}\mathbf{Y}-\mathbf{D}_{k} \label{eqGramAdj}%
\end{equation}
as $k\rightarrow\infty$ with $n$ fixed, where $\mathbf{D}_{k}=\mathrm{diag}%
\left\{  \sigma_{i,k},\ldots,\sigma_{n,k}\right\}  $ and $\sigma_{i,k}%
=k^{-1}\sum\nolimits_{j=1}^{k}\mathbb{V}\left[  y_{ij}\right]  $. Note that
$\sigma_{i,k}$ is the average of the variances of $y_{ij}$'s in the $i$th
column of $\mathbf{Y}$ and that $\mathcal{S}_{\mathbf{M}}$ can not be
consistently estimated from $k^{-1}\mathbf{Y}^{T}\mathbf{Y}$ when
$\mathbf{D}_{k}$ is not a multiple of the identity matrix $\mathbf{I}_{n}$.
Therefore, we need to consistently estimate $\mathbf{D}_{k}$, i.e., to
estimate $\sigma_{i,k}$ for $1\leq i\leq n$.

If $y_{ij}$ follows a parametric distribution $F_{\theta_{ij}}$ for
$\theta_{ij}\in\Theta$ and $\mathcal{F}=\left\{  F_{\theta}:\theta\in
\Theta\right\}  $ forms an i.d. NEF with RF $\varphi$, then
\begin{equation}
\mathbb{E}\left[  \varphi\left(  y_{ij}\right)  \right]  =\mathbb{V}\left[
y_{ij}\right]  \label{eqIntra}%
\end{equation}
and $\hat{\sigma}_{i,k}=k^{-1}\sum\nolimits_{j=1}^{k}\varphi\left(
y_{ij}\right)  $ satisfies $\mathbb{E}\left[  \hat{\sigma}_{i,k}\right]
=\sigma_{i,k}$. In other words, the RF $\varphi$ induces an unbiased estimator
$\hat{\sigma}_{i,k}$ of $\sigma_{i,k}$, for which $\mathbf{\hat{D}}%
_{k}=\mathrm{diag}\left\{  \hat{\sigma}_{1,k},\ldots,\hat{\sigma}%
_{n,k}\right\}  $ is an unbiased estimator of $\mathbf{D}_{k}$. Under a moment
condition, it can be shown that $\lim_{k\rightarrow\infty}\left\vert
\hat{\sigma}_{i,k}-\sigma_{i,k}\right\vert =0$ for $1\leq i\leq n$ and
$\lim_{k\rightarrow\infty}\left\Vert \mathbf{\hat{D}}_{k}-\mathbf{D}%
_{k}\right\Vert =0$ hold almost surely; see Lemma 8 in \cite{Chen:2014intra}.
In view of this, $\varphi$ induces an unbiased estimator of $\mathbf{D}_{k}$
and enables consistent estimation of the latent space $\mathcal{S}%
_{\mathbf{M}}$ when $y_{ij}$'s follow a NEF we have identified.

\section*{Acknowledgements}

This research is funded by the Office of Naval Research grant
N00014-12-1-0764. I would like to thank John D. Storey for support, G\'{e}rard
Letac for guidance that has led to much improved results, Peter Jones for a
discussion on the maximal zero free domain of a Laplace transform, and Wayne
Smith for a discussion on the inverse of analytic functions.

\bibliographystyle{chicago}

\end{document}